\title{Decomposing the Complete $r$-Graph}
\author{Imre Leader\thanks{Department of Pure Mathematics and Mathematical Statistics, Centre for Mathematical Sciences, University of Cambridge, Wilberforce Road, Cambridge CB3 0WB, United Kingdom. Email: \texttt{I.Leader@dpmms.cam.ac.uk}.} \and Luka Mili\'cevi\'c\thanks{Department of Pure Mathematics and Mathematical Statistics, Centre for Mathematical Sciences, University of Cambridge, Wilberforce Road, Cambridge CB3 0WB, United Kingdom. Email: \texttt{lm497@cam.ac.uk}.} \and Ta Sheng Tan\thanks{Institute of Mathematical Sciences, Faculty of Science, University of Malaya, 50603 Kuala Lumpur, Malaysia. Email: \texttt{tstan@um.edu.my}. This author acknowledges support received from the Ministry of Higher Education of Malaysia via FRGS grant FP048-2014B.}}
\newtheorem{thm}{Theorem}
\newtheorem{proposition}[thm]{Proposition}
\newtheorem{conjecture}[thm]{Conjecture}
\newtheorem{question}[thm]{Question}
\theoremstyle{remark}  
\theoremstyle{definition}
\begin{document}

\maketitle
\begin{abstract}
 Let $f_r(n)$ be the minimum number of complete $r$-partite $r$-graphs needed to partition the edge set of the complete $r$-uniform hypergraph on $n$ vertices. Graham and Pollak showed that $f_2(n) = n-1$.
 An easy construction shows that $f_r(n)\le (1-o(1))\binom{n}{\lfloor r/2\rfloor}$ and it has been unknown if this upper bound is asymptotically sharp.
 In this paper we show that $f_r(n)\le (\frac{14}{15}+o(1))\binom{n}{r/2}$ for each even $r\ge 4$.
\end{abstract}

\emph{Keywords: }Hypergraph, Decomposition, Graham-Pollak

\section{Introduction}

The edge set of $K_n$, the complete graph on $n$ vertices, can be partitioned into $n-1$ complete bipartite subgraphs: this may be done in many ways, for example by taking $n-1$ stars centred at different vertices. Graham and Pollak~\cite{graham1,graham2} proved that the number $n-1$ cannot be decreased. Several other proofs of this result have been found, by Tverberg~\cite{tverberg}, Peck~\cite{peck}, and Vishwanathan~\cite{vishwanathan1,vishwanathan2}.

Generalising this to hypergraphs, for $n\ge r\ge 1$, let $f_r(n)$ be the minimum number of complete $r$-partite $r$-graphs needed to partition the edge set of $K_n^{(r)}$, the complete $r$-uniform hypergraph on $n$ vertices
(i.e., the collection of all $r$-sets from an $n$-set).
Thus the Graham-Pollak theorem asserts that $f_2(n) = n-1$. For $r\ge 3$, an easy upper bound of $\binom{n-\lceil r/2\rceil}{\lfloor r/2\rfloor}$ may be obtained by generalising the star example above. 
Indeed, having ordered the vertices, consider the collection of $r$-sets whose $2nd, 4th, \ldots, (2\lfloor r/2\rfloor)th$ vertices are fixed. This forms a complete $r$-partite $r$-graph, and the collection of all $\binom{n-\lceil r/2\rceil}{\lfloor r/2\rfloor}$ such is a partition of $K_n^{(r)}$. 
(There are many other constructions achieving the exact same value - see, for example Alon's recursive construction in \cite{alon1}.)

Alon~\cite{alon1} showed that $f_3(n) = n-2$. More generally, for each fixed $r\ge 1$, he showed that  
$$\frac{2}{\binom{2\lfloor r/2 \rfloor}{\lfloor r/2 \rfloor}}(1+o(1))\binom{n}{\lfloor r/2\rfloor}\le f_r(n)\le (1-o(1))\binom{n}{\lfloor r/2\rfloor},$$
where the upper bound is from the construction above.

The best known lower bound for $f_r(n)$ was obtained by Cioab\v{a}, K\"{u}ngden and Verstra\"{e}te~\cite{cioaba1}, who showed that $f_{2k}(n) \ge \frac{2\binom{n-1}{k}}{\binom{2k}{k}}.$
For upper bounds for $f_r(n)$, the above construction is not sharp in general. 
Cioab\v{a} and Tait~\cite{cioaba2} showed that $f_6(8) = 9<\binom{8-3}{3}$,  and used this to give an improvement in a lower-order term, showing that $f_{2k}(n) \le \binom{n-k}{k} - 2\left\lfloor\frac{n}{16}\right\rfloor \binom{\lfloor\frac{n}{2}\rfloor-k+3}{k-3}$ for any $k\ge 3$. 
(We mention briefly that any improvement of $f_4(n)$ for any $n$ will further improve the above upper bound. Indeed, one can check that $f_4(7) = 9< \binom{7-2}{2}$, and this will imply that $f_r(n)\le \binom{n-\lfloor r/2\rfloor}{\lfloor r/2\rfloor} - cn^{\lfloor r/2\rfloor-1}$ for some positive constant $c$. But note that, again, this is only an improvement to a lower-order term.)

Despite these improvements, the asymptotic bounds of Alon have not been improved. 
Perhaps the most interesting question was whether the asymptotic upper bound is the correct estimate.

Our aim of the paper is to show that the asymptotic upper bound is not correct for each even $r\ge 4$. In particular, we will show that $$f_4(n) \le \frac{14}{15}(1+o(1))\binom{n}{2},$$
and obtain the same improvement of $\frac{14}{15}$ for each even $r\ge 4$.

A key to our approach will be to consider a related question:
what is the minimum number of products of complete bipartite graphs, that is, sets of the form $E(K_{a,b})\times E(K_{c,d})$, needed to partition $E(K_n)\times E(K_n)$?
There is an obvious guess, namely that we take the product of the complete bipartite graphs in the partitions of both $K_n$s.
This gives a partition using $(n-1)^2$ products of complete bipartite graphs.
But can we improve this?
Writing $g(n)$ for the minimum value, it will turn out that, unlike for $f_4$, any improvement in the value of $g(n)$ for one $n$ gives an asymptotic improvement for $g$ as well.
In this sense, this means that $g$ is a `better' function to investigate than $f_4$. 

The plan of the paper is as follows. 
In Section~\ref{sec2}, we show how the function $g$ is related to $f_4$, and give some related discussions. 
Then in Section~\ref{sec3}, we investigate the simplest product of complete graphs: we attempt to partition the product set $E(K_3)\times E(K_n)$ into products of complete bipartite graphs. 
Although Section~\ref{sec3} is not strictly needed for our final bounds, it does provide several ideas and motivation for later.
In Section~\ref{sec4}, we prove our main result on $g$ and from this on $f_4$. 
Finally, in Section~\ref{sec5}, we mention some remarks and open problems. 

We use standard graph and hypergraph language throughout the paper. 
For an $r$-uniform hypergraph $H$, let $f_r(H)$ be the minimum number of complete $r$-partite $r$-uniform hypergraphs needed to partition the edge set of $H$. So $f_r\left(K_n^{(r)}\right)$ is just $f_r(n)$.
A \emph{minimal decomposition} of an $r$-graph $H$ is a partition of the edge set of $H$ into $f_r(H)$ complete $r$-partite $r$-graphs.
A \emph{block} is a product of the edge sets of two complete bipartite graphs. 
For graphs $G$ and $H$, let $g(G, H)$ be the minimum number of blocks needed to partition the set $E(G)\times E(H)$. Thus $g(n) = g(K_n,K_n)$.
Similarly, a \emph{minimal decomposition} of $E(G)\times E(H)$ is a partition of the set into $g(G,H)$ blocks.

\section{Products of complete bipartite graphs}\label{sec2}

We start by showing how $g$ is related to $f_4$.

\begin{proposition}\label{prop_fn}
 Let $\alpha>0$ be a constant. If $g(n)\le \alpha n^2$ for all $n$, then $f_4(n)\le \alpha(1+o(1))\frac{n^2}{2}$.
\end{proposition}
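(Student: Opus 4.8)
The plan is to set up a divide-and-conquer recursion for $f_4(n)$ in which the dominant cost is handled by the function $g$ and the remaining cost is negligible. Fix a vertex set $V$ of size $n$ and split it into two almost-equal parts $X,Y$ with $|X|=\lceil n/2\rceil$ and $|Y|=\lfloor n/2\rfloor$. I classify each edge (i.e.\ $4$-set) of $K_n^{(4)}$ by how many of its vertices lie in $X$, giving five types, which I denote $4$-$0$, $3$-$1$, $2$-$2$, $1$-$3$, $0$-$4$. I will decompose each type separately and then add up the numbers of complete $4$-partite $4$-graphs used.

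The crucial observation concerns the $2$-$2$ edges. A $4$-set meeting $X$ in exactly two vertices is determined by an unordered pair from $X$ together with an unordered pair from $Y$, so the $2$-$2$ edges are in natural bijection with $E(K_X)\times E(K_Y)$. Moreover, a block $E(K_{A_1,A_2})\times E(K_{B_1,B_2})$ with $A_1,A_2\subseteq X$ disjoint and $B_1,B_2\subseteq Y$ disjoint corresponds under this bijection exactly to the complete $4$-partite $4$-graph with parts $A_1,A_2,B_1,B_2$ (these four sets being automatically pairwise disjoint). Hence any decomposition of $E(K_X)\times E(K_Y)$ into blocks yields a decomposition of the $2$-$2$ edges into the same number of complete $4$-partite $4$-graphs. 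Since restricting a block decomposition of a larger product gives one of a smaller sub-product, $g$ is monotone, so $g(K_X,K_Y)\le g(\lceil n/2\rceil)$; by hypothesis this is at most $\alpha\lceil n/2\rceil^2=(1+o(1))\tfrac{\alpha}{4}n^2$.

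The remaining types are cheap. The $4$-$0$ and $0$-$4$ edges form copies of $K_X^{(4)}$ and $K_Y^{(4)}$, which I decompose recursively using $f_4(\lceil n/2\rceil)+f_4(\lfloor n/2\rfloor)$ pieces. For the $3$-$1$ edges I take a minimal decomposition of $K_X^{(3)}$ into $f_3(\lceil n/2\rceil)$ complete $3$-partite $3$-graphs; forming the product of each part-triple $P_1,P_2,P_3$ with the single set $Y$ gives a complete $4$-partite $4$-graph on parts $P_1,P_2,P_3,Y$, and as the triples run over the decomposition every $3$-$1$ edge is covered exactly once. Thus the $3$-$1$ edges cost $f_3(\lceil n/2\rceil)=O(n)$ pieces, and symmetrically for the $1$-$3$ edges. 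Combining the five contributions gives the recursion
\[
f_4(n)\le f_4(\lceil n/2\rceil)+f_4(\lfloor n/2\rfloor)+(1+o(1))\tfrac{\alpha}{4}n^2+O(n).
\]

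It then remains to solve this recursion. Substituting a trial leading term $f_4(n)\approx\beta n^2$ into $\beta n^2=2\beta(n/2)^2+\tfrac{\alpha}{4}n^2$ forces $\beta=\tfrac{\alpha}{2}$, which is exactly the target $\alpha(1+o(1))\tfrac{n^2}{2}$. Equivalently, unrolling over the $\log_2 n$ levels, the $g$-contributions form the geometric sum $\sum_{j\ge 0}2^{j}\cdot\alpha\,(n/2^{\,j+1})^2=\tfrac{\alpha}{2}n^2$ to leading order, while the $O(n)$ terms over all levels and the constant-size base cases together contribute only $O(n\log n)=o(n^2)$. The main point requiring care is the bookkeeping of the error terms through the recursion, so that they remain genuinely $o(n^2)$ and the $(1+o(1))$ factor is uniform; by contrast, the conceptual heart is the clean identification of the $2$-$2$ edges with $E(K_X)\times E(K_Y)$ and of blocks with complete $4$-partite $4$-graphs, after which only routine estimation remains.
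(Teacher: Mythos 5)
Your proof is correct and follows essentially the same route as the paper: split the vertex set into two halves, handle the $2$-$2$ edges via the identification with $E(K_X)\times E(K_Y)$ and a block decomposition costing $g(\lceil n/2\rceil)$, handle the $3$-$1$ and $1$-$3$ edges with $f_3 = O(n)$ pieces, recurse on the two halves for the $4$-$0$ and $0$-$4$ edges, and solve the resulting recursion. The paper phrases the last step as an induction with the explicit ansatz $\alpha n^2/2 + Cn\log n$ rather than unrolling the recursion, but this is only a cosmetic difference.
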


 \begin{proof}
  We will show that 
  \begin{equation}\label{eqn_fn}
   f_4(n)\le \alpha\left(\frac{n^2}{2}\right) + Cn\log n
  \end{equation}
  for some sufficiently large $C$.
  This is clearly true for $n\le 4$. So assume $n>4$ and the inequality \eqref{eqn_fn} holds for $1,2,\ldots,n-1$. 
  We will consider the case when $n$ is even - the case when $n$ is odd is similar.
  
  In order to decompose the edge set of $K_n^{(4)}$, we can split the $n$ vertices into two equal parts, say $V\left(K_n^{(4)}\right) = A\cup B$, where $|A|=|B|= n/2$. The sets of 4-edges $\{e: e\subset A$\} and $\{e:e\subset B$\} can each be decomposed into $f_4(n/2)$ complete 4-partite 4-graphs; the sets of 4-edges $\{e: |e\cap A|=3\}$ and $\{e: |e\cap B|=3\}$ can each be decomposed into $f_3(n/2)$ complete 4-partite 4-graphs; while the remaining set of 4-edges $\{e: |e\cap A| = |e\cap B|=2\}$ can be decomposed into $g(n/2)$ complete 4-partite 4-graphs. So by the assumption of $g(n)$ and the induction hypothesis, we have
  \begin{align*}
   f_4(n) &\le 2f_4(n/2) + g(n/2) + 2f_3(n/2)\\
	  &\le 2\left(\alpha\left(\frac{n^2}{8}\right)+\frac{Cn}{2}\log\left(\frac{n}{2}\right)\right) + \alpha\left(\frac{n}{2}\right)^2 + 2\left(\frac{n}{2}-2\right)\\
	  &\le \alpha\left(\frac{n^2}{2}\right) + Cn\log n.
  \end{align*}
 \end{proof}
 
 In the Introduction, we mentioned that any improvement in the upper bound of $f_4(n)$ from the easy upper bound of $\binom{n-2}{2}$, for one fixed $n$, will lead to an improvement for all (greater) values of $n$, but not an asymptotic improvement.
 However, very helpfully, this is not the case for $g$. 
 Indeed, any improvement to $g(n)$ for one particular $n$ leads to an asymptotic improvement.
 This is the content of the following simple proposition.
 
 \begin{proposition}\label{prop_gn}
  Suppose $g(K_{a},K_{b}) < (a-1)(b-1)$ for some $a$ and $b$. Then $g(n) \le \beta n^2$ for all $n$, for some constant $\beta<1$.
 \end{proposition}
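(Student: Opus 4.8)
The plan is to show that one rectangular improvement can be \emph{amplified} into a constant-factor gain by iterating a lexicographic-product blow-up. It is convenient to measure quality by the \emph{defect} $\mathrm{def}(x,y):=(x-1)(y-1)-g(K_x,K_y)\ge 0$; the hypothesis says $\mathrm{def}(a,b)=\delta$ for some integer $\delta\ge 1$. The key tool is a recursion obtained by viewing $K_{as}$ as the lexicographic product $K_a[K_s]$ on vertex set $\{1,\dots,a\}\times\{1,\dots,s\}$. Its edges split into \emph{inter-block} edges (endpoints differing in the first coordinate), which form a blow-up of $E(K_a)$ by factor $s$, and \emph{intra-block} edges, which are $a$ disjoint copies of $E(K_s)$. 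The crucial observation is that blowing up a complete bipartite subgraph of $K_a$ yields a complete bipartite subgraph of $K_{as}$, so blowing up any block decomposition of $E(K_a)\times E(K_b)$ yields a valid block decomposition of the inter$\times$inter part of $E(K_{as})\times E(K_{bt})$. Treating all four parts (inter/intra $\times$ inter/intra) separately — a good decomposition on inter$\times$inter and on each of the $ab$ copies in intra$\times$intra, and the Graham--Pollak bound $m-1$ applied to the complete-graph factor of each mixed part — I would obtain
\[
g(K_{as},K_{bt})\le g(K_a,K_b)+ab\,g(K_s,K_t)+a(b-1)(s-1)+(a-1)b(t-1).
\]
Substituting the identity $(as-1)(bt-1)=(a-1)(b-1)+ab(s-1)(t-1)+a(b-1)(s-1)+(a-1)b(t-1)$ turns this into the clean \emph{defect-amplification rule}
\[
\mathrm{def}(as,bt)\ge \mathrm{def}(a,b)+ab\cdot \mathrm{def}(s,t).
\]

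Next I would apply this in two ways. First, running the same computation with the inner and outer factors swapped in one coordinate (i.e.\ writing the two copies of $K_{ab}$ as $K_a[K_b]$ and $K_b[K_a]$) converts the rectangular hypothesis into a \emph{square} gain: $g(K_c,K_c)\le(c-1)^2-\delta(c+1)$ where $c=ab$, so $\mathrm{def}(c,c)=:\delta'\ge 1$. Then, iterating the rule with $a=b=s=t=c$ gives $\mathrm{def}(c^k,c^k)\ge \delta'+c^2\,\mathrm{def}(c^{k-1},c^{k-1})$, which solves to $\mathrm{def}(c^k,c^k)\ge \frac{\delta'}{c^2-1}(c^{2k}-1)$, and hence $g(K_{c^k},K_{c^k})\le \beta_0 c^{2k}+O(1)$ with $\beta_0=1-\frac{\delta'}{c^2-1}<1$. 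This amplification is exactly the phenomenon advertised in the introduction: a single application only improves an additive constant (as happens for $f_4$), but because the rule multiplies the inherited defect by $ab$, iterating makes the defect a \emph{fixed proportion} of the main term.

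Finally, to reach all $n$, I would, given $n$, pick $k$ with $c^k\le n<c^{k+1}$ and partition each $K_n$ into $c^k$ groups of nearly equal size (each of size $<c$). Using the amplified $c^k$-scale decomposition on the between-groups (inter$\times$inter) part, trivial decompositions on the intra$\times$intra pairs (contributing at most $(n-c^k)^2$ since the complete-graph defects across the two coordinates multiply), and the explicit count $2(n-c^k)(c^k-1)$ for the two mixed parts, the totals telescope to
\[
g(K_n,K_n)\le \beta_0 c^{2k}+\big(n^2-c^{2k}\big)+O(1)=n^2-(1-\beta_0)c^{2k}+O(1).
\]
Since $c^{2k}>n^2/c^2$, this is at most $\beta n^2+O(1)$ with $\beta=1-(1-\beta_0)/c^2<1$; absorbing the additive constant (and checking the finitely many small $n$ directly, where $g(K_n,K_n)\le(n-1)^2<n^2$) yields a single $\beta<1$ valid for all $n$.

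The hard part, and the real content, is the realization that a single blow-up step is \emph{rate-neutral}: the mixed terms exactly account for the Graham--Pollak cost, so the inherited defect appears only additively and one must feed the improvement back into \emph{both} the inter$\times$inter and intra$\times$intra parts and iterate. The main obstacle in the bookkeeping is then controlling the mixed (cross) terms, i.e.\ verifying that across all scales they contribute only the $(n^2-c^{2k})$ and $O(n)$ pieces and never swamp the $(1-\beta_0)c^{2k}$ gain; the convenient device making this transparent is tracking the defect rather than the raw count, so that the amplification rule $\mathrm{def}(as,bt)\ge\mathrm{def}(a,b)+ab\,\mathrm{def}(s,t)$ does all the work.
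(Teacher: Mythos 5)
Your proof is correct, but it takes a genuinely different route from the paper's. The paper's argument is subtractive rather than multiplicative: it removes a copy of $K_b$ from $K_{1+(b-1)j}$, observes that what remains is a blow-up of $K_{1+(b-1)(j-1)}$ (on which $g$ is unchanged, since complete bipartite subgraphs blow up to complete bipartite subgraphs), and so gets $g(K_a,K_{1+(b-1)j})\le g(K_a,K_b)+g(K_a,K_{1+(b-1)(j-1)})$; a double induction of this form yields $g(K_{1+(a-1)i},K_{1+(b-1)j})\le cij$ directly, where $c=g(K_a,K_b)$, so the improved \emph{ratio} $c/((a-1)(b-1))$ propagates immediately with no further bookkeeping. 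Your lexicographic-product recursion and the defect-amplification rule $\mathrm{def}(as,bt)\ge\mathrm{def}(a,b)+ab\,\mathrm{def}(s,t)$ all check out (the four-way split, the identity for $(as-1)(bt-1)$, the symmetrisation $\mathrm{def}(ab,ab)\ge\delta(ab+1)$, the geometric-series solution, and the final interpolation over $c^k\le n<c^{k+1}$ are each valid), so your argument does prove the proposition. It is, however, more elaborate than necessary, and your closing diagnosis is slightly off: a single subtractive step is \emph{not} rate-neutral, since every peeled-off copy already costs $c$ instead of $(a-1)(b-1)$, which is why the paper needs no defect accounting. What your approach buys is that the defect formalism makes transparent why the mixed (cross) terms never erode the gain under products; what the paper's buys is brevity and a much better constant, namely $\beta$ arbitrarily close to $c/((a-1)(b-1))$ rather than your $1-(1-\beta_0)/(ab)^2$ (though any $\beta<1$ suffices for the statement as given).
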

 
 \begin{proof}
 Suppose $g(K_{a},K_{b}) = c<(a-1)(b-1)$ for some fixed $a$ and $b$. Then, setting $\alpha = \frac{c}{(a-1)(b-1)}$, we will show that $$g(K_{1+(a-1)i}, K_{1+(b-1)j})\le \alpha((a-1)i)((b-1)j)=cij$$ for any $i,j\ge 1$. This will then imply that $g(1+(a-1)(b-1)k) \le \alpha((a-1)(b-1)k)^2$ for any $k\ge 1$, and hence
  $$g(n) \le \alpha n^2 + Cn $$ for some constant $C$.
 
  We proceed by induction on $i$. 
  We will show the base case of $g(K_{a}, K_{1+(b-1)j})\le cj$ by induction on $j$. The case $j=1$ is true by assumption. So fix $j>1$ and by induction, we have $g(K_a, K_{1+(b-1)(j-1)})\le c(j-1)$.
  
  Let $G=K_{b}$ be a subgraph of $K_{1+(b-1)j}$.
  Note that $K_{1+(b-1)j} - G$ (i.e., the graph $K_{1+(b-1)j}$ with the edges of $G$ removed) is a blow-up of $K_{1+(b-1)(j-1)}$ by replacing one of its vertices with an empty graph on $b$ vertices. So $g(K_{a}, K_{1+(b-1)j} - G) = g(K_a, K_{1+(b-1)(j-1)})\le c(j-1)$, implying
  \begin{align*}
   g(K_{a}, K_{1+(b-1)j}) &\le g(K_{a}, G) + g(K_{a}, (K_{1+(b-1)j} - G))\\
   &\le g(K_{a}, K_{b}) + c(j-1)\\ 
   &\le cj.
  \end{align*}
  
  Now fix $i>1$ and assume the theorem is true for $i-1$. That is, 
  $$g(K_{1+(a-1)(i-1)}, K_{1+(b-1)j})\le c(i-1)j$$ for all $j\ge 1$.
  To decompose $E(K_{1+(a-1)i})\times E(K_{1+(b-1)j})$ for any fixed $j$, we first let $H = K_{a}$ and note that $K_{1+(a-1)i} - H$ is a blow-up of $K_{1+(a-1)(i-1)}$ by replacing one of its vertices with an empty graph on $a$ vertices. Therefore,
  \begin{align*}
   g(K_{1+(a-1)i}, K_{1+(b-1)j}) &\le g(H, K_{1+(b-1)j}) + g((K_{1+(a-1)i} - H), K_{1+(b-1)j})\\
   &\le g(K_{a}, K_{1+(b-1)j}) + g(K_{1+(a-1)(i-1)}, K_{1+(b-1)j})\\ 
   &\le cj + c(i-1)j \qquad \mbox{(by the base case and induction hypothesis)}\\
   &= cij.
  \end{align*}
  This completes the proof of the proposition.
 \end{proof}

 From Proposition~\ref{prop_fn} and Proposition~\ref{prop_gn}, in order to improve the asymptotic upper bound on $f_4(n)$, it is enough to find $a$ and $b$ such that $g(K_a,K_b) < (a-1)(b-1)$. 
 
 The rest of this section is a digression (and so could be omitted if the reader wishes). 
 The question of whether or not $g(n)=(n-1)^2$ has the flavour of a `product' question. 
 Indeed, it is an example of the following general question.
 Suppose we have a set $X$ and a family $\mathcal{F}$ of some subsets of $X$, and we write $c(X,\mathcal{F})$ for the minimum number of sets in $\mathcal{F}$ needed to partition $X$.
 Is it true that $c(X\times Y,\mathcal{F}\times \mathcal{G}) = c(X,\mathcal{F})c(Y,\mathcal{G})$, where $\mathcal{F}\times \mathcal{G}=\{F\times G: F\in \mathcal{F}, G\in \mathcal{G}\}$?
 
 This is certainly not \emph{always} true. Indeed, for a simple example, let $X=\{1,2,\ldots,7\}$ and $\mathcal{F}=\{A\subset X: |A| = 1\text{ or }4\}$. 
 Clearly, $c(X,\mathcal{F})=4$.
 But $X\times X$ can be partitioned into four 3 by 4 rectangles and a single point,  
 giving $c(X\times X,\mathcal{F}\times \mathcal{F}) \le 13$. 
 
 However, there are a few cases where such a product theorem is known. 
 For example, Alon, Bohman, Holzman, and Kleitman~\cite{alon2} proved that if $X$ is a finite set of size at least 2, then any partition of $X^n$ into proper boxes must consist of at least $2^n$ boxes.
 Here, a \emph{box} is a subset of $X^n$ of the form $B_1\times B_2\times \ldots \times B_n$, where each $B_i$ is a subset of $X$.
 A box is \emph{proper} if $B_i$ is a proper subset of $X$ for every $i$.
 Note that this corresponds to a product theorem where $\mathcal{F}$ is the family of all proper subsets of $X$.
 (There are also some related results by Ahlswede and Cai in \cite{ahlswede1,ahlswede2}.)
 
 Unfortunately, we have not been able to prove any product theorem that might relate to our problem about $g(n)$.
 Indeed, it seems difficult to extend the result of Alon, Bohman, Holzman, and Kleitman at all. 
 For example, here are two closely related problems that we cannot solve.

 A box is \emph{odd} if its size is odd. Let $X$ be a finite set such that $|X|$ is odd. We can partition $X^n$ into $3^n$ odd proper boxes - can we do better?
 
 \begin{question}
  Let $X$ be a finite set such that $|X|$ is odd. Must any partition of $X^n$ into odd proper boxes consist of at least $3^n$ boxes?
 \end{question}
 
 We do not even see how to answer this question when $|X|=5$.

 A collection of proper boxes $B^{(1)},B^{(2)},\ldots, B^{(m)}$ of $X^n$ is said to form a \emph{uniform cover} of $X^n$ if every point of $X^n$ is covered the same number of times.
 
 \begin{question}
  Let $X$ be such that $|X|\ge 2$.
  Suppose $B^{(1)},B^{(2)},\ldots, B^{(m)}$ forms a uniform cover of $X^n$.
  Must we have $m\ge 2^n$?
 \end{question}

\section{Decomposing \texorpdfstring{$E(K_3)\times E(K_n)$}{K3}}\label{sec3}

In this section, we investigate $g(K_3,K_n)$.
As we know, we can decompose $E(K_3)\times E(K_n)$ using $2(n-1)$ blocks, and the question is whether we can improve this.

It turns out that the Graham-Pollak theorem actually gives some restriction on how small $g(K_3,K_n)$ can be.
To be more precise, we will need a weighted version of the Graham-Pollak theorem.
For the sake of completeness, we will include a proof here, although we stress that this is just a rewriting of the usual proof of the Graham-Pollak theorem.

Given a graph $G$ and a real number $\alpha$, we write $\alpha\cdot G$ for the weighted graph where each edge of $G$ is given a weight of $\alpha$. 
A collection of subgraphs $G_1,G_2,\ldots,G_m$ of $K_n$ is a \emph{weighted decomposition of $K_n$} if there exists real numbers $\alpha_1,\alpha_2,\ldots,\alpha_m$ such that
for each edge $e$ of $K_n$ we have $\displaystyle \sum_{i:e\in G_i} \alpha_i\: =\: 1$.
Note that the $\alpha_i$ are allowed to be negative.

\begin{thm}\label{weighted_GP}
 The minimum number of complete bipartite graphs needed to form a weighted decomposition of $K_n$ is $n-1$.
\end{thm}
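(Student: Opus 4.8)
The plan is to treat the two inequalities separately, with essentially all the work in the lower bound. For the upper bound, observe that the classical Graham--Pollak star decomposition already partitions $E(K_n)$ into $n-1$ complete bipartite graphs; giving each of these the weight $1$ turns that genuine partition into a weighted decomposition, so $n-1$ complete bipartite graphs always suffice.

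For the lower bound I would run the standard algebraic (quadratic-form) proof of the Graham--Pollak theorem, and check that the weights $\alpha_i$ cause no difficulty. Suppose $G_1,\dots,G_m$, with real weights $\alpha_1,\dots,\alpha_m$, form a weighted decomposition of $K_n$, where $G_i$ is the complete bipartite graph with classes $X_i,Y_i\subseteq[n]$. Introduce a real variable $x_v$ for each vertex $v$, and write $L_i=\sum_{v\in X_i}x_v$ and $M_i=\sum_{v\in Y_i}x_v$. Since the edges of $G_i$ are exactly the pairs with one endpoint in $X_i$ and the other in $Y_i$, we have $L_iM_i=\sum_{u\in X_i,\,v\in Y_i}x_ux_v$, so the coefficient of $x_ux_v$ in $L_iM_i$ is $1$ when $\{u,v\}\in E(G_i)$ and $0$ otherwise. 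Multiplying by $\alpha_i$, summing over $i$, and invoking the defining identity $\sum_{i:e\in G_i}\alpha_i=1$ for each edge $e$, the coefficient of each $x_ux_v$ on the right becomes $1$, yielding the polynomial identity
\begin{equation*}
\sum_{1\le u<v\le n}x_ux_v \;=\; \sum_{i=1}^m \alpha_i L_i M_i,
\end{equation*}
valid identically in the variables $x_v$.

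Now I would argue by contradiction: suppose $m\le n-2$. The $m+1\le n-1$ homogeneous linear equations $L_1=\dots=L_m=0$ together with $\sum_v x_v=0$ form a system of fewer than $n$ equations in $n$ unknowns, so it has a nonzero real solution. Evaluating the identity at such a point, every $L_i$ vanishes, so the right-hand side is $0$ and hence $\sum_{u<v}x_ux_v=0$; but then
\begin{equation*}
0=\Big(\sum_v x_v\Big)^2=\sum_v x_v^2+2\sum_{u<v}x_ux_v=\sum_v x_v^2,
\end{equation*}
forcing every $x_v=0$, a contradiction. Therefore $m\ge n-1$, completing the lower bound.

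The only point that needs care --- and the reason this weighted statement is worth isolating --- is the interaction between the weights and the argument. Crucially, the proof only ever evaluates the identity at a common zero of the linear forms $L_i$, where the products $\alpha_iL_iM_i$ all vanish regardless of the sign or magnitude of the $\alpha_i$. Thus the fact that the $\alpha_i$ may be negative is immaterial, and the classical bound survives verbatim; there is no genuine extra obstacle beyond recording that the factorisation $L_iM_i$ (which uses the \emph{completeness} of each bipartite graph) places the weights only as outer coefficients of the products $L_iM_i$ of linear forms.
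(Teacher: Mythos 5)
Your proposal is correct and follows essentially the same route as the paper: the same polynomial identity $\sum_{u<v}x_ux_v=\sum_i\alpha_iL_iM_i$, the same dimension count on the linear system $L_1=\dots=L_m=0$, $\sum_vx_v=0$, and the same contradiction via $\sum_vx_v^2=0$. The only (harmless) addition is that you spell out the upper bound via the star decomposition, which the paper leaves implicit.
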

\begin{proof}
 Let the vertex set of $K_n$ be $V = \{1,2,\ldots,n\}$ and associate each vertex $i$ with a real variable $x_i$. 
 Let $G$ be a complete bipartite subgraph of $K_n$ with vertex classes $X$ and $Y$. 
 Then we can define $Q(G) = L(X)\cdot L(Y)$, where $L(A) = \sum_{i\in A} x_i$ for any subset $A\subset V$.
 
 Suppose the bipartite graphs $G_k, 1\le k\le q$ with vertex classes $X_k$ and $Y_k$ form a weighted decomposition of $K_n$. Then we must have 
 $$\sum_{i<j} x_ix_j = \sum_{k=1}^q \alpha_kL(X_k)L(Y_k)$$ for some real $\alpha_1,\alpha_2,\ldots, \alpha_q$. Rewriting the left-hand-side of the above equation, we have 
 \begin{align*}
  \left(\sum_{i=1}^n x_i\right)^2 - \sum_{i=1}^n x_i^2 &= 2\sum_{k=1}^q \alpha_kL(X_k)L(Y_k).
 \end{align*}
 
 It follows that the linear subspace of $\mathbb{R}^n$ determined by the $q+1$ linear equations $\sum_{i=1}^n x_i = 0$ and $L(X_i) = 0, 1\le i\le q$,
 must be the zero subspace. Hence $q+1\ge n.$
\end{proof}

\begin{proposition}\label{prop_k3kn}
 For $n\ge 2$ we have $$\frac{9}{5}(n-1)\le g(K_3, K_n)\le 2(n-1).$$
\end{proposition}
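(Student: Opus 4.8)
The plan is to prove the two inequalities separately, the upper bound being routine and the lower bound being the substantive part.

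For the upper bound $g(K_3,K_n)\le 2(n-1)$ I would simply take products of decompositions: since $K_3$ decomposes into $f_2(3)=2$ complete bipartite graphs and $K_n$ into $f_2(n)=n-1$ of them by Graham--Pollak, the $2(n-1)$ products of these complete bipartite graphs form a partition of $E(K_3)\times E(K_n)$ into blocks.

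For the lower bound I would use Theorem~\ref{weighted_GP}. The starting observation is that a block decomposition of $E(K_3)\times E(K_n)$ can be \emph{sliced}: fix any weights $c_e$ on the three edges $e$ of $K_3$ with $\sum_e c_e\neq 0$, and for each block $B_k=P_k\times Q_k$ set $\sigma_k=\sum_{e\in P_k}c_e$. Because the blocks partition the product, for every edge $f$ of $K_n$ and every edge $e$ of $K_3$ there is exactly one $k$ with $e\in P_k$ and $f\in Q_k$; multiplying this count by $c_e$ and summing over $e$ gives $\sum_{k:\,f\in Q_k}\sigma_k=\sum_e c_e$ for every $f$. Dividing through by $\sum_e c_e$ exhibits the graphs $Q_k$ with $\sigma_k\neq 0$, carrying weights $\sigma_k/\sum_e c_e$, as a weighted decomposition of $K_n$. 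Hence by Theorem~\ref{weighted_GP} the number $N(c)$ of blocks with $\sigma_k\neq 0$ is at least $n-1$.

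The crux is then a good choice of several weight vectors $c$. Since $K_3$ is not bipartite, each $P_k$ is either a single edge or a two-edge star at a vertex, giving six possible types, and $\sigma_k$ depends only on the type: the three edge-types contribute $c_{12},c_{13},c_{23}$, and the three star-types contribute $c_{12}+c_{13}$, $c_{12}+c_{23}$, $c_{13}+c_{23}$. If I choose a multiset of $t$ weight vectors and sum the inequality $N(c)\ge n-1$ over them, the left-hand side equals $\sum_k d_{S(k)}$, where $d_S$ counts how many of the chosen vectors make $\sigma$ nonzero on type $S$; so if every type is made nonzero by at most a fraction $\rho$ of the vectors, I obtain $g(K_3,K_n)\ge \rho^{-1}(n-1)$. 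I would take the nine vectors consisting of $(1,0,0),(0,1,0),(0,0,1)$ each with multiplicity two together with $(1,1,-1),(1,-1,1),(-1,1,1)$ each once (all having nonzero coordinate sum, as required); a direct check shows that each of the six types is nonzero on exactly five of these nine vectors, so $\rho=5/9$ and $g(K_3,K_n)\ge\tfrac{9}{5}(n-1)$.

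The main obstacle is precisely this last optimisation: the edge-types vanish easily under vectors with a zero coordinate, whereas the star-types only vanish under vectors such as $(1,1,-1)$, so one must balance the two families of vectors so that no type is favoured, and verify that $5/9$ is the ratio they jointly force. A minor point to confirm is that repeated graphs $Q_k$ cause no difficulty in applying Theorem~\ref{weighted_GP}, which is immediate since merging equal graphs (adding their weights) only decreases the count.
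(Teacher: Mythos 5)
Your proof is correct and is essentially the paper's own argument in a different packaging: the slicing step is the paper's passage to the graphs $G_0,G_1,G_2,G_3$ combined with Theorem~\ref{weighted_GP}, your vectors $(1,0,0),(0,1,0),(0,0,1)$ (each taken twice) reproduce the three Graham--Pollak inequalities \eqref{eq2} with the same $2{:}1$ weighting the paper uses, and $(1,1,-1),(1,-1,1),(-1,1,1)$ encode exactly the weighted decompositions $1\cdot (G_0\cup G_i)+1\cdot (G_0\cup G_j)+(-1)\cdot (G_0\cup G_k)+2\cdot G_k$ behind \eqref{eq3}. The verification that each of the six left-part types is nonzero on exactly five of the nine vectors is correct, so the argument goes through.
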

\begin{proof}
The upper bound has been explained already. 
For the lower bound,
suppose the blocks $H_1, H_2, \ldots, H_q$ form a decomposition of $E(K_3)\times E(K_n)$. Then for each edge $e\in E(K_n)$, restricting the decomposition to the subset $E(K_3)\times e$, one of the following happens: either the three elements of $E(K_3)\times e$ decompose into three different $H_i$, or else two of the sets are in the same $H_i$ for some $i$ and the third set is in $H_j$ for some $j\ne i$. 

Let $G_0$ be the subgraph of $K_n$ spanned by the set of $e$ such that the first of these happens, and $G_1,G_2,G_3$ be the subgraphs of $K_n$ spanned by the set of $e$ for each of the three possible ways for the second case to happen, respectively. Thus in total we have
\begin{equation}\label{eq1}
 q\ge f_2(G_1) + f_2(G_2) + f_2(G_3) + f_2(G_0\cup G_1) + f_2(G_0\cup G_2) + f_2(G_0\cup G_3).
\end{equation}

Now, since $G_0,G_1,G_2,G_2$ form a partition of the edge set of $K_n$, we must have 
\begin{equation}\label{eq2}
 f_2(G_i)+f_2(G_j)+f_2(G_0\cup G_k)\ge n-1  
\end{equation}
 for any $\{i,j,k\} = \{1,2,3\}.$
 Next, note that $1\cdot (G_0\cup G_i), 1\cdot (G_0\cup G_j), (-1)\cdot (G_0\cup G_k), 2\cdot G_k$ form a weighted decomposition of $K_n$ for any $\{i,j,k\} = \{1,2,3\}$, so by Theorem~\ref{weighted_GP}, we must have 
 \begin{equation}\label{eq3}
  f_2(G_0\cup G_1)+f_2(G_0\cup G_2)+f_2(G_0\cup G_3)+f_2(G_i)\ge n-1
 \end{equation}
 for any $i=1,2,3$.
 
 Let $x = \frac{1}{3}\left(f_2(G_1)+f_2(G_2)+f_2(G_3)\right)$ and $y = \frac{1}{3}\left(f_2(G_0\cup G_1)+f_2(G_0\cup G_2)+f_2(G_0\cup G_3)\right)$. Summing over different $\{i,j,k\}$ for inequality~\eqref{eq2}, we get $2x+y\ge n-1$; while summing over different $i$ for inequality~\eqref{eq3}, we get $x+3y\ge n-1$. This implies that $x+y\ge \frac{3}{5}(n-1)$, and together with inequality~\eqref{eq1}, we conclude that
 \begin{align*}
  q &\ge 3x + 3y,\\
  \mbox{ i.e. }\quad q &\ge \frac{9}{5}(n-1).
 \end{align*}
\end{proof}
Note that for any partition of $K_n$ into $G_0, G_1,G_2,G_3$, we do obtain that $g(K_3,K_n)$ is at most the right-hand-side of \eqref{eq1}.

We believe that the only restriction on $g(K_3,K_n)$ should be the restriction coming from the Graham-Pollak theorem, namely that $g(K_3,K_n)\ge \frac{9}{5}(n-1)$. 
However, we have been unable to find any decomposition of $E(K_3)\times E(K_n)$ into fewer than $2(n-1)$ blocks.

\begin{question}
 Does there exist a constant $\alpha<2$ such that $g(K_3, K_n)\le (\alpha+o(1))n$? In particular, can we take $\alpha=\frac{9}{5}$?
\end{question}

\section{Decomposing \texorpdfstring{$E(K_4)\times E(K_n)$}{K4}}\label{sec4}

 The aim of this section is to find some $a,b$ in which $E(K_a)\times E(K_b)$ can be partitioned into fewer than $(a-1)(b-1)$ blocks.
 In the previous section, we looked at decompositions of $E(K_3)\times E(K_n)$ by considering all the four possible ways to decompose $E(K_3)$ into complete bipartite graphs - this induced four subgraphs that partitioned the edge set of $K_n$. 
 
 Now, those decompositions of $K_3$ involved three `large' complete bipartite subgraphs (namely, the $K_{1,2}$s), which between them form a 2-cover of $K_3$ (each edge of $K_3$ is in exactly two of them). However, this is in a sense `wasteful', as by the Graham-Pollak theorem, we might expect to find a uniform cover by three `large' complete bipartite subgraphs of $K_4$, rather than $K_3$. 
 
 This suggests that we should look at $E(K_4)\times E(K_n)$ instead of $E(K_3)\times E(K_n)$.
 It also suggests that, in each $E(K_4)\times e$, we do not allow \emph{any} decomposition of $K_4$, but just four decompositions of $K_4$, three of which involves a `large' complete bipartite subgraph and the fourth of which consists of single edges. 
 More precisely, the three decompositions of $K_4$ we allow here each consists of a 4-cycle and two independent edges.
 The three pairs of independent edges from these decompositions in turn form another decomposition of $K_4$ (into six complete bipartite graphs, each of which is a single edge). 
  
 Let $C_1, C_2, C_3$ be the three different 4-cycles of $K_4$ and let $G_0, G_1, G_2, G_3$ be the subgraphs of $K_n$ (as in Proposition~\ref{prop_k3kn}) whose edge sets partition the edge set of $K_n$.
 Then the sets $E(C_1)\times E(G_1), E(C_2)\times E(G_2), E(C_3)\times E(G_3), E(K_4 - C_1)\times E(G_0\cup G_1), E(K_4 - C_2)\times E(G_0\cup G_2), E(K_4 - C_3)\times E(G_0\cup G_3)$ form a partition of $E(K_4)\times E(K_n)$. So $E(K_4)\times E(K_n)$ can be decomposed into 
 \begin{equation*}
   f_2(G_1)+f_2(G_2)+f_2(G_3)+2f_2(G_0\cup G_1)+2f_2(G_0\cup G_2)+2f_2(G_0\cup G_3)
 \end{equation*}
 blocks.
 
 By the same argument as in Proposition~\ref{prop_k3kn}, we have the following.
 
 \begin{proposition}
   For $n\ge 2$, we have $$\frac{12}{5}(n-1)\le g(K_4, K_n)\le 3(n-1).$$
 \end{proposition}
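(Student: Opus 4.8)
The plan is to treat the two bounds separately. The upper bound is immediate from the construction just described: taking $G_3=E(K_n)$ and $G_0=G_1=G_2=\emptyset$ collapses the six sets $E(C_j)\times E(G_j)$ and $E(K_4-C_j)\times E(G_0\cup G_j)$ into the product of a fixed $3$-part decomposition of $K_4$ (namely $C_3$ together with the two edges of $K_4-C_3$) with a minimal decomposition of $K_n$, so $g(K_4,K_n)\le 3(n-1)$. For the lower bound I would follow the proof of Proposition~\ref{prop_k3kn} as closely as possible.

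First I would set up the analogue of the graphs $G_0,\dots,G_3$. Given a decomposition of $E(K_4)\times E(K_n)$ into blocks $H_i=A_i\times B_i$, restricting to a fibre $E(K_4)\times e$ yields a decomposition $\pi_e$ of $K_4$ into complete bipartite graphs. Since any two of the $4$-cycles $C_1,C_2,C_3$ share an edge, a fibre contains at most one of them; let $G_\ell$ be the set of $e$ with $C_\ell\in\pi_e$, and let $G_0$ collect the remaining $e$. The key rigidity is that for $e\in G_\ell$ the complement $K_4-C_\ell$ is a perfect matching, which cannot be split into anything other than its two single edges, so $\pi_e$ is forced to be $\{C_\ell,\{f\},\{f'\}\}$ with $\{f,f'\}=K_4-C_\ell$. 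I would then bound the blocks by type: the blocks with $A_i=C_\ell$ restrict on $G_\ell$ to a decomposition of $K_n$, giving at least $\sum_\ell f_2(G_\ell)$ of them, while for each edge $f$ of $K_4-C_\ell$ the blocks containing $f$ whose $B$-part meets $G_0\cup G_\ell$ restrict to a decomposition of $G_0\cup G_\ell$, so the two edges of each matching ought to contribute the factor-two terms $2f_2(G_0\cup G_\ell)$. Writing $x=\tfrac13\sum_\ell f_2(G_\ell)$ and $y=\tfrac13\sum_\ell f_2(G_0\cup G_\ell)$, the target is the analogue of \eqref{eq1}, namely $q\ge 3x+6y$.

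The constraints \eqref{eq2} and \eqref{eq3} then carry over verbatim, since they concern only $K_n$ and the partition $E(K_n)=G_0\cup G_1\cup G_2\cup G_3$: inequality \eqref{eq2} gives $2x+y\ge n-1$, and the weighted decomposition of Theorem~\ref{weighted_GP} gives $x+3y\ge n-1$. Minimising $3x+6y$ subject to these (the optimum occurring at their intersection $x=\tfrac25(n-1)$, $y=\tfrac15(n-1)$) would yield
\[
 g(K_4,K_n)\ \ge\ 3x+6y\ \ge\ \tfrac{12}{5}(n-1).
\]

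The step I expect to be the real obstacle is establishing the analogue of \eqref{eq1}, that is $q\ge 3x+6y$, because the fibres in $G_0$ are far less rigid than in the $K_3$ case. For $K_3$ every fibre is one of only four decompositions, so each edge is isolated on a clean union of the $G_\ell$; but a $4$-cycle-free decomposition of $K_4$ may also use copies of $K_{1,2}$ and $K_{1,3}$, and such a block through an edge $f$ covers part of $G_0$ without being a single-edge block. The per-matching argument does go through — for each edge $f$ of $K_4-C_\ell$ the blocks through $f$ meeting $G_0\cup G_\ell$ decompose $G_0\cup G_\ell$, and since no complete bipartite subgraph of $K_4$ contains both edges of a perfect matching these two families are disjoint, giving at least $2f_2(G_0\cup G_\ell)$ per matching. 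The difficulty is that on summing over the three matchings each $K_{1,2}$-block is counted twice and each $K_{1,3}$-block three times, so one directly obtains only $q_{\mathrm{single}}+2q_{1,2}+3q_{1,3}\ge 6y$ rather than $q_{\mathrm{single}}+q_{1,2}+q_{1,3}\ge 6y$. Converting this into the coefficient $6$ on $y$ needed for the LP forces one to control these larger parts using structural facts about $4$-cycle-free decompositions of $K_4$ — for instance that stars at distinct vertices overlap, so a fibre contains at most one $K_{1,3}$ — and pinning this multiplicity down is the crux that distinguishes the $K_4$ case from Proposition~\ref{prop_k3kn}.
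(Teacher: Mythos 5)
Your upper bound is fine, and your set-up for the lower bound is exactly the intended adaptation of Proposition~\ref{prop_k3kn}: classify the edges $e$ of $K_n$ by which $4$-cycle (if any) occurs in the fibre over $e$, use the rigidity of the fibres containing a $4$-cycle, and import the constraints $2x+y\ge n-1$ and $x+3y\ge n-1$ from \eqref{eq2} and Theorem~\ref{weighted_GP}. But the proof is not complete, and the step you flag as the crux is a genuine gap, not a formality. What your counting actually delivers is this: for an edge $f$ of $K_4$ in the perfect matching $M_\ell$, the blocks whose $K_4$-part contains $f$ and is not a $4$-cycle have $K_n$-parts partitioning $G_0\cup G_\ell$, so there are at least $f_2(G_0\cup G_\ell)$ of them; summing over the six edges of $K_4$ counts each such block once per edge of its $K_4$-part, i.e.\ up to three times. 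Hence you only obtain $q\ge 3x+\tfrac13\cdot 6y=3x+2y$, and minimising $3x+2y$ subject to $2x+y\ge n-1$ and $x+3y\ge n-1$ gives $\tfrac{8}{5}(n-1)$, not $\tfrac{12}{5}(n-1)$. The coefficient $6$ on $y$ would require every non-$4$-cycle block to have a single edge of $K_4$ as its first factor, i.e.\ that every fibre over $G_0$ is the all-singletons decomposition of $K_4$ --- precisely what fails once $K_{1,2}$'s and $K_{1,3}$'s are allowed. (The paper's own proof is only the sentence ``by the same argument as in Proposition~\ref{prop_k3kn}'', so in effect you have discovered that the argument does not transfer verbatim; the missing multiplicity control is not supplied there either.)

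Moreover, the gap does not seem to be closable by simply adding more weightings to the Graham--Pollak machinery. Writing $S_A$ for the set of $e\in E(K_n)$ whose fibre uses the complete bipartite subgraph $A$ of $K_4$, every constraint of the type you are using has the form $\sum_{A:\,\mu(A)\ne 0}f_2(S_A)\ge n-1$, where $\mu$ is a weighting of $E(K_4)$ with nonzero total weight and $\mu(A)=\sum_{f\in A}\mu_f$. Taking, for each triangle of $K_4$ and each apex, the weighting equal to $1$ on the two triangle edges at the apex and $-1$ on the opposite edge (which annihilates three of the four stars, two of the three $4$-cycles and five of the twelve cherries), and combining these twelve constraints with the six single-edge constraints and the three constraints from $\mu=+1$ on $C_\ell$, $-1$ on $M_\ell$, one can check that the optimal nonnegative combination yields $q\ge\tfrac{11}{5}(n-1)$ --- better than $\tfrac{8}{5}$, but still short of $\tfrac{12}{5}$. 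And the assignment $f_2(S_A)=\tfrac13(n-1)$ for each $4$-cycle, $\tfrac1{15}(n-1)$ for each single edge and each $K_{1,2}$, and $0$ for each $K_{1,3}$ satisfies every constraint of this form with total only $\tfrac{11}{5}(n-1)$, so reaching $\tfrac{12}{5}(n-1)$ requires genuinely new combinatorial input about which fibre decompositions can coexist (or about the star- and cherry-blocks directly). As written, your proposal proves $\tfrac{8}{5}(n-1)\le g(K_4,K_n)\le 3(n-1)$, not the stated bounds.
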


 Again, it seems plausible that the only constraint on $g(K_4,K_n)$ is the one coming from the Graham-Pollak theorem. 
  
\begin{conjecture}\label{conj_k4kn}
 $g(K_4,K_n) = \frac{12}{5}(1+o(1))n$.
\end{conjecture}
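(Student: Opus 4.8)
The plan is to prove the lower bound separately from the upper bound. The inequality $g(K_4,K_n)\ge \frac{12}{5}(n-1)$ is already available (it is the content of the proposition stated just before the conjecture, obtained exactly as in Proposition~\ref{prop_k3kn} via the weighted Graham--Pollak theorem), so the whole problem is to match it with an upper bound $g(K_4,K_n)\le\bigl(\frac{12}{5}+o(1)\bigr)n$. My first move would be to reduce this asymptotic statement to finitely much work, using the blow-up idea from the proof of Proposition~\ref{prop_gn}. Replacing a vertex of $K_b$ by an independent set shows, exactly as in the base case there (with $a=4$), that $g(K_4,K_{1+(b-1)j})\le j\,g(K_4,K_b)$, and the vertices left over when $n\not\equiv 1 \pmod{b-1}$ cost only a bounded additive term. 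Hence if, for a sequence $b\to\infty$, one can exhibit decompositions with $\frac{g(K_4,K_b)}{b-1}\to\frac{12}{5}$, then $\limsup_n g(K_4,K_n)/n\le \frac{12}{5}$, and combined with the lower bound this settles the conjecture. So it suffices to find near-optimal decompositions of $E(K_4)\times E(K_b)$ for one large $b$ at a time.

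To build these I would use the explicit construction from the start of this section: a partition of $E(K_b)$ into $G_0,G_1,G_2,G_3$ yields a decomposition of size $3x+6y$, where $x=\frac13\sum_i f_2(G_i)$ and $y=\frac13\sum_i f_2(G_0\cup G_i)$. The constraints \eqref{eq2} and \eqref{eq3}, that is $2x+y\ge b-1$ and $x+3y\ge b-1$, are precisely the constraints of a linear program whose minimum of $3x+6y$ equals $\frac{12}{5}(b-1)$, attained at $x=\frac25(b-1)$, $y=\frac15(b-1)$. Thus the target is a partition in which, on average, $f_2(G_0\cup G_i)\approx\frac15(b-1)$ while $f_2(G_i)\approx\frac25(b-1)$. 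The cleanest way to force $f_2(G_0\cup G_i)$ to be small is to make each $G_0\cup G_i$ complete multipartite with about $(b-1)/5$ parts, so that $f_2=(\#\text{parts})-1$; then the parts have average size about $5$, and one must separately check that the residual graphs $G_i$ have $f_2(G_i)$ no larger than $\frac25(b-1)$, which should follow from their structure.

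The heart of the matter is therefore the following design problem. Viewing the partition into $G_0,\dots,G_3$ as a $4$-colouring of $E(K_b)$, the requirement that each $G_0\cup G_i$ be complete multipartite says that for every $i\in\{1,2,3\}$ the edges avoiding colours $0$ and $i$ form a disjoint union of cliques; that is, we are given three partitions $\mathcal{Q}_1,\mathcal{Q}_2,\mathcal{Q}_3$ of the vertex set into blocks of average size about $5$, and the colour of an edge is read off from which of these partitions place its endpoints in a common block. Chasing the four constraints shows that any two vertices must lie together in \emph{exactly} zero or \emph{exactly} two of the three partitions; equivalently, recording each vertex by the triple of blocks containing it, the resulting points have pairwise odd Hamming distance. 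Producing many such points, while keeping both the block sizes and the values $f_2(G_i)$ under control, is exactly where I expect the real difficulty to lie.

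I expect this parity constraint to be the main obstacle, and to be why the statement is only conjectured. It already kills the most naive attempts: a binary alphabet (two blocks per partition) cannot support three points at pairwise odd distance, so the smallest case $b=6$ cannot be solved with each $G_0\cup G_i$ a spanning complete bipartite graph; and the obvious grid partitions of a product set $A\times A$ fail because the third relation forced by the parity rule is not transitive. A successful attack will therefore need either a genuinely clever family of such odd-distance designs, with average block size about $5$ as $b\to\infty$, or a more flexible construction in which different edges $e$ of $K_b$ are allowed to use different decompositions of $E(K_4)\times e$, going beyond the single scheme of three $4$-cycles plus a matching; this extra freedom is exactly what the general lower-bound argument already anticipates. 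Failing an exact construction, I would at least aim for the weaker $g(K_4,K_b)\le\bigl(\frac{12}{5}+\varepsilon\bigr)(b-1)$ for some large $b$ and small $\varepsilon>0$, since by the reduction above even this would push the leading constant for $g(K_4,K_n)$ below $3$.
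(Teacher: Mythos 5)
This statement is an open conjecture in the paper: the authors explicitly say they are unable to resolve it, and the paper only establishes $\frac{12}{5}(n-1)\le g(K_4,K_n)\le 3(n-1)$ together with the single sporadic improvement $g(K_4,K_6)\le 14$. So there is no proof in the paper to compare against, and your proposal does not supply one either. What you have written is a correct reduction and an honest diagnosis, not a proof. To your credit, the pieces you do assert check out: the blow-up argument giving $g(K_4,K_{1+(b-1)j})\le j\,g(K_4,K_b)$ is exactly the technique of Proposition~\ref{prop_gn} and validly reduces the upper bound to exhibiting finite examples with $g(K_4,K_b)/(b-1)\to\frac{12}{5}$; the linear program correctly identifies the target profile $x\approx\frac25(b-1)$, $y\approx\frac15(b-1)$ with objective value $3x+6y=\frac{12}{5}(b-1)$; and the parity observation (every pair of vertices must be co-blocked in exactly $0$ or exactly $2$ of the three partitions, once each $G_0\cup G_i$ is forced to be complete multipartite) is a genuine necessary condition, correctly derived from the odd-cover structure.

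The gap is that the construction itself --- the only thing separating the conjecture from what the paper already proves --- is entirely absent, and it is worth being precise about how much is missing. First, no family of triples of partitions with the pairwise-odd-Hamming-distance property and the right block sizes is exhibited, and the obstruction you identify is real: three points of $\{0,1\}^3$ cannot be pairwise at odd Hamming distance, which is exactly why the paper's $K_6$ example must break symmetry and settles for $14$ rather than $12$. Second, even granting such partitions, your claim that $f_2(G_i)\le\frac25(b-1)(1+o(1))$ ``should follow from their structure'' is unsubstantiated: under your hypotheses $G_i$ is precisely the disjoint union of cliques on the blocks of the common refinement $\mathcal{Q}_j\wedge\mathcal{Q}_k$, so $f_2(G_i)=b-N$ where $N$ is the number of blocks of that refinement, and you would need $N\ge(\frac35-o(1))b$ --- an additional, quite restrictive design constraint (average refinement-block size at most about $5/3$, while each $\mathcal{Q}_i$ has average block size about $5$) that you never address. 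Third, restricting every fibre $E(K_4)\times e$ to the single scheme of three $4$-cycles plus a matching is a choice; the conjectured optimum might require mixing different decompositions across different edges $e$, and nothing in your proposal rules this in or out. In short, your framing is accurate and consistent with the paper's, but the statement remains unproved by both you and the authors; at best your last paragraph points toward how one might beat the constant $3$, which is what the paper actually achieves by the ad hoc $K_6$ example rather than by realizing the LP optimum.
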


 While we are unable to resolve this conjecture, we \emph{are} able to find an example with $g(K_4,K_n)< 3(n-1)$. 
 We start by observing that $G_0\cup G_1, G_0\cup G_2, G_0\cup G_3$ form an \emph{odd cover} of $K_n$ (each edge of $K_n$ appears an odd number of times).
 Now, it is known (see, e.g., \cite{radhakrishnan}) that $K_8$ has an odd cover with four complete bipartite graphs. 
 Indeed, the four $K_{3,3}$s with vertex classes $V_1 = \{1,3,5\}\cup \{2,4,6\}, V_2 = \{1,4,7\}\cup \{2,3,8\}, V_3 = \{2,5,7\}\cup \{1,6,8\}$ and $V_4 = \{3,6,7\}\cup \{4,5,8\}$ respectively form an odd cover of $K_8$.
 If we break the symmetry by deleting two vertices (vertices 6 and 8) from this odd cover of $K_8$, we obtain an odd cover of $K_6$ by four complete bipartite graphs, two of which are now disjoint.
 The union of these two disjoint complete bipartite graphs, together with the other two complete bipartite graphs, will be our $G_0\cup G_1, G_0\cup G_2, G_0\cup G_3$.
 Remarkably, this does give rise to a decomposition of $E(K_4)\times E(K_6)$ into fewer than 15 blocks.
 
 \begin{proposition}\label{prop_k4k6}
  The set $E(K_4)\times E(K_6)$ can be decomposed into $14$ blocks. In other words, $g(K_4, K_6) \le 14<(4-1)(6-1)$.
 \end{proposition}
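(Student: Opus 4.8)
The plan is to feed the general construction developed immediately above into one carefully chosen partition $E(K_6)=G_0\cup G_1\cup G_2\cup G_3$ and then simply verify that the resulting block count is $14$. Recall that any such partition yields a decomposition of $E(K_4)\times E(K_6)$ into
$$f_2(G_1)+f_2(G_2)+f_2(G_3)+2f_2(G_0\cup G_1)+2f_2(G_0\cup G_2)+2f_2(G_0\cup G_3)$$
blocks, and that the three graphs $G_0\cup G_1,G_0\cup G_2,G_0\cup G_3$ automatically form an odd cover of $K_6$. So the whole task reduces to exhibiting an odd cover of $K_6$ whose three parts have small biclique partition numbers, while at the same time the three single-covered ``difference'' graphs $G_1,G_2,G_3$ remain cheap.

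First I would take the odd cover of $K_8$ by the four $K_{3,3}$'s listed above and restrict it to the six vertices $\{1,2,3,4,5,7\}$ by deleting vertices $6$ and $8$. This leaves four complete bipartite graphs $B_1,B_2,B_3,B_4$ that still odd-cover $K_6$, where $B_1,B_2$ remain copies of $K_{3,2}$ but $B_3$ (now a star) and $B_4$ (now a $K_{2,2}$) are edge-disjoint. I would then set $G_0\cup G_1=B_1$, $G_0\cup G_2=B_2$ and $G_0\cup G_3=B_3\cup B_4$, so that $G_0$ is the set of edges covered by all three parts and $G_1,G_2,G_3$ are the edges covered by exactly one. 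Tabulating the coverage of each of the $15$ edges pins down the partition: $G_0$ consists of just two edges, while $G_1,G_2,G_3$ are sparse five-vertex graphs, two of them paths and one a triangle with two pendant edges.

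The next step is to display, piece by piece, a biclique partition of the predicted size: $f_2(G_1)=f_2(G_2)=f_2(G_3)=2$, each splitting into two stars; $f_2(G_0\cup G_1)=f_2(G_0\cup G_2)=1$, each being a single $K_{3,2}$; and $f_2(G_0\cup G_3)=2$, namely the star $B_3$ together with the $K_{2,2}$ $B_4$. Substituting into the formula gives $2+2+2+2(1)+2(1)+2(2)=14$, which is the claimed bound and is strictly below $(4-1)(6-1)=15$.

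I expect the main obstacle to be discovery rather than verification: there is no a priori reason that deleting this particular pair of vertices should force all six biclique numbers to be this small, and it is precisely the edge-disjointness of $B_3$ and $B_4$ (so that $G_0\cup G_3$ costs $2$ instead of $3$) combined with each single-covered graph being a two-star graph that makes the totals collapse to exactly $14$. The only genuine checking is confirming these $f_2$ values, and since I only need the upper bound $g(K_4,K_6)\le 14$, it suffices to exhibit the bicliques explicitly for each piece; no Graham--Pollak lower bound on the individual pieces is required.
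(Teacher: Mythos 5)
Your proposal is correct and takes essentially the same route as the paper: the paper's proof simply writes down the explicit partition you obtain by restricting the odd cover of $K_8$ to six vertices (with vertex $7$ relabelled as $6$), namely $E(G_0)=\{12,34\}$ with $G_0\cup G_1$, $G_0\cup G_2$ the two $K_{3,2}$'s and $G_0\cup G_3$ the disjoint union of the star and the $K_{2,2}$, and then verifies the same six values $f_2(G_i)=2$, $f_2(G_0\cup G_1)=f_2(G_0\cup G_2)=1$, $f_2(G_0\cup G_3)=2$, giving $2+2+2+2(1)+2(1)+2(2)=14$. Your identification of $G_1,G_2$ as paths and $G_3$ as a triangle with two pendant edges matches the paper's explicit edge lists, so the verification goes through exactly as you describe.
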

 \begin{proof}
  Let $G_0, G_1$, $G_2$, $G_3$ be graphs that form a decomposition of $K_6$, defined as follow:
  \begin{align*}
   &E(G_0) = \{12,34\},\\ 
   &E(G_0\cup G_1) = \big\{ij:i\in \{1,3,5\}, j\in \{2,4\} \big\}, \\
   &E(G_0\cup G_2) = \big\{ij:i\in \{1,4,6\}, j\in \{2,3\}\big\},\\
   &E(G_0\cup G_3) = \big\{ij:i\in\{3,6\}, j\in \{4,5\}\big\}\cup \{12,15,16\}.
  \end{align*}

  By construction, we have $f_2(G_0\cup G_1) = f_2(G_0\cup G_2) = 1$, and $f_2(G_0\cup G_3) = 2$, and a quick check shows that
  $f_2(G_1) = f_2(G_2) = f_2(G_3) = 2$. So from the discussion above we have
  $$g(K_4, K_6) \le \sum_{i=1}^3 \big(f_2(G_i)+2f_2(G_0\cup G_i)\big) = 14.$$
 \end{proof}
  
 Combining Proposition~\ref{prop_fn}, Proposition~\ref{prop_gn} and Proposition~\ref{prop_k4k6}, we obtain our main result.

  \begin{thm}\label{thm_n}
  $f_4(n)\le \frac{14}{15}(1+o(1))\binom{n}{2}$.
 \end{thm}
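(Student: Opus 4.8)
The plan is to chain together the three cited results: the concrete example of Proposition~\ref{prop_k4k6} feeds the amplification of Proposition~\ref{prop_gn}, whose output in turn feeds the transfer from $g$ to $f_4$ of Proposition~\ref{prop_fn}.

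First I would invoke Proposition~\ref{prop_k4k6}, which supplies a pair $(a,b)=(4,6)$ with $g(K_4,K_6)\le 14 < 15 = (4-1)(6-1)$; this is exactly the hypothesis of Proposition~\ref{prop_gn}. Reading off the explicit constant from the proof of that proposition (rather than merely its qualitative conclusion $\beta<1$), with $c=14$, $a=4$, $b=6$ one gets $\alpha = \frac{c}{(a-1)(b-1)} = \frac{14}{15}$, and hence
\begin{equation*}
 g(n) \le \frac{14}{15} n^2 + Cn
\end{equation*}
for all $n$ and some constant $C$.

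Next I would feed this estimate into Proposition~\ref{prop_fn} with $\alpha=\frac{14}{15}$, obtaining $f_4(n)\le \frac{14}{15}(1+o(1))\frac{n^2}{2}$. Since $\binom{n}{2}=(1+o(1))\frac{n^2}{2}$, this is the same as $f_4(n)\le \frac{14}{15}(1+o(1))\binom{n}{2}$, which is the desired bound.

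The only point needing care is a mismatch in the hypotheses: Proposition~\ref{prop_fn} is stated with the clean assumption $g(n)\le\alpha n^2$ for all $n$, whereas Proposition~\ref{prop_gn} delivers $g(n)\le\frac{14}{15}n^2+Cn$, carrying an additive linear term. This is the main (and essentially only) obstacle, and it is minor. The proof of Proposition~\ref{prop_fn} goes through verbatim with the slightly weaker hypothesis $g(n)\le\alpha n^2+C_0 n$: in the recursion $f_4(n)\le 2f_4(n/2)+g(n/2)+2f_3(n/2)$ the extra contribution $C_0(n/2)$ is of lower order and is harmlessly absorbed into the $Cn\log n$ error term of inequality~\eqref{eqn_fn}. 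Applying this version with $\alpha=\frac{14}{15}$ then yields the theorem.
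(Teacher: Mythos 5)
Your proposal is correct and is exactly the paper's argument: the paper proves Theorem~\ref{thm_n} by combining Propositions~\ref{prop_fn}, \ref{prop_gn} and \ref{prop_k4k6} in precisely this chain, with $\alpha=\frac{14}{15}$ read off from the proof of Proposition~\ref{prop_gn}. Your observation that the additive $Cn$ term is harmlessly absorbed into the $Cn\log n$ error in the recursion of Proposition~\ref{prop_fn} is a valid (and slightly more careful) handling of a point the paper glosses over.
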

 
 \section{Remarks and open problems}\label{sec5}

 Proposition~\ref{prop_k4k6} (together with Proposition~\ref{prop_gn}) implies that $g(n)\le \frac{14}{15}(1+o(1))n^2$.
 We do not believe $\frac{14}{15}$ is the correct constant, but we are not able to improve it.
 What about a lower bound of $g(n)$?
 From Proposition~\ref{prop_fn}, we know that if $g(n) = \alpha n^2(1+o(1))$, then we have $f_4(n)\le \alpha(1+o(1))\binom{n}{2}$. 
 So we must have $\alpha \ge \frac{1}{3}$ from Alon's result on the lower bound of $f_4(n)$.
 
 Here, we are able to give a small improvement, namely $\alpha\ge \frac{1}{2}$.
 For this, we will need a result by Reznick, Tiwari, and West~\cite{reznick} on decomposing weak product graphs into bipartite graphs. The \emph{weak product} $G*H$ of two graphs $G$ and $H$ has vertex set $\{(u,v):u\in V(G), v\in V(H)\}$ with $(u_1,v_1)\sim(u_2,v_2)$ if and only if $u_1\sim u_2$ in $G$ and $v_1\sim v_2$ in $H$.
 
 \begin{thm}[\cite{reznick}]\label{thm_weakproduct}
  The minimum number of complete bipartite graphs needed to partition the edge set of $K_n * K_n$ is $(n-1)^2+1$.
 \end{thm}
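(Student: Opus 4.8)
The plan is to prove the two bounds separately, obtaining the lower bound from an eigenvalue (Witsenhausen-type) argument and the upper bound from an explicit construction, which I expect to be the harder half.

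For the lower bound I would pass to adjacency matrices. Writing $D = J - I$ for the adjacency matrix of $K_n$, the weak product $K_n * K_n$ has adjacency matrix $M = D \otimes D$, since $(u_1,v_1)\sim(u_2,v_2)$ precisely when $u_1\sim u_2$ in $K_n$ and $v_1\sim v_2$ in $K_n$. The spectrum of $D$ is $n-1$ (once, with eigenvector $\mathbf 1$) together with $-1$ (multiplicity $n-1$), so the eigenvalues of $M$ are the pairwise products: $(n-1)^2$ with multiplicity $1$, the value $-(n-1)$ with multiplicity $2(n-1)$, and $1$ with multiplicity $(n-1)^2$. Hence $M$ has $n_+ = (n-1)^2 + 1$ positive and $n_- = 2(n-1)$ negative eigenvalues, and $n_0 = 0$. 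I would then invoke the standard fact that a partition of a graph into $b$ complete bipartite graphs forces $b \ge \max(n_+, n_-)$: writing $M = \sum_{i=1}^b (u_i v_i^\top + v_i u_i^\top)$ with $u_i, v_i$ the disjointly supported indicator vectors of the two sides, the subspace $W = \{x : v_i^\top x = 0 \text{ for all } i\}$ has dimension at least $N - b$ (with $N = n^2$) and satisfies $x^\top M x = 2\sum_i (u_i^\top x)(v_i^\top x) = 0$ for all $x \in W$. Since a totally isotropic subspace of a form of inertia $(n_+,n_-,n_0)$ has dimension at most $n_0 + \min(n_+,n_-) = N - \max(n_+,n_-)$, we conclude $b \ge \max(n_+,n_-) = (n-1)^2 + 1$, using that $(n-1)^2 + 1 \ge 2(n-1)$ for all $n$.

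For the upper bound I need a partition into exactly $(n-1)^2 + 1$ bicliques, and this is where the real work lies. Two natural attempts fall short. Taking a Graham--Pollak star decomposition $K_n = \bigsqcup_{i=1}^{n-1} B_i$ in each factor and forming the products $B_i * B_j$ does partition $E(K_n*K_n)$, but each $B_i * B_j$ splits into two bicliques, giving $2(n-1)^2$. Alternatively, peeling edges off by their smaller row index assigns to each row $a$ and column $b$ the biclique joining $(a,b)$ to all vertices in rows $>a$ and columns $\ne b$; these $n(n-1)$ bicliques partition the edge set, and a rank bound shows the cost is even optimal row-by-row, yet globally $n(n-1)$ still exceeds $(n-1)^2+1$ by $n-2$. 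The lesson is that any tight construction must use bicliques that straddle several row-classes simultaneously, so neither factorwise nor row-by-row peeling can succeed.

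The main obstacle is therefore to produce these ``global'' bicliques and to realise the single large positive eigenvalue $(n-1)^2$ as a distinguished extra biclique (the ``$+1$''). The route I would pursue is induction on $n$: assuming a partition of $K_{n-1}*K_{n-1}$ into $(n-2)^2+1$ bicliques on the grid $[n-1]^2$, I would cover all edges incident to the new row $n$ and new column $n$ using exactly $(n-1)^2 - (n-2)^2 = 2n-3$ further bicliques. Organising these $2n-3$ pieces so that the row-$n$-to-interior, column-$n$-to-interior, and row-$n$-to-column-$n$ edges are merged into shared complete bipartite graphs (rather than treated separately, which would cost order $n$ each) is the crux: it is exactly the step where the cancellation visible in the eigenvalue count must be matched by genuine overlaps in the construction. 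This is the part I expect to demand the most care, and it is the content established in \cite{reznick}.
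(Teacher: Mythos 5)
First, a point of calibration: the paper does not prove Theorem~\ref{thm_weakproduct} at all --- it is quoted from \cite{reznick} --- so there is no internal proof to compare you against. Judged on its own terms, your proposal establishes only one of the two inequalities in the stated equality. The lower bound is correct and complete: the adjacency matrix of $K_n * K_n$ is $(J-I)\otimes(J-I)$, your eigenvalue count ($n_+=(n-1)^2+1$ positive and $n_-=2(n-1)$ negative eigenvalues) is right, and the inertia (Witsenhausen) bound $b\ge\max(n_+,n_-)$ is applied correctly, including the check that $(n-1)^2+1\ge 2(n-1)$, which holds since it is equivalent to $(n-2)^2\ge 0$. This is the standard argument and is essentially how the bound is obtained in \cite{reznick}. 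It is also worth noting that this is the only direction the paper actually uses: the application is the inequality $g(n)\ge\left\lceil\frac{(n-1)^2+1}{2}\right\rceil$, which needs only the lower bound on the number of bicliques.

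The genuine gap is the upper bound. You correctly diagnose why the two naive constructions cost $2(n-1)^2$ and $n(n-1)$ respectively, and your inductive accounting ($(n-1)^2-(n-2)^2=2n-3$ new bicliques per step) identifies the right target, but no construction meeting that budget is exhibited; the closing sentence ``this is the content established in \cite{reznick}'' is an appeal to the reference, not a proof. As written, the proposal shows that at least $(n-1)^2+1$ complete bipartite graphs are needed, but not that this many suffice, so the equality in the statement remains unproved. If you want to close this, you need the explicit decomposition from \cite{reznick}; the difficulty you flag --- that a tight partition must use bicliques straddling several row-classes --- is real, and the $2n-3$-per-step induction does not obviously go through without a further idea, so deferring to the source is honest but leaves the theorem, as stated, only half proven.
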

 
 \begin{proposition}
  For $n\ge 2$, we have $g(n)\ge \left\lceil\frac{(n-1)^2+1}{2}\right\rceil$.
 \end{proposition}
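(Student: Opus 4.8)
The plan is to convert a block decomposition of $E(K_n)\times E(K_n)$ into a decomposition of the weak product $K_n * K_n$ into complete bipartite graphs, and then appeal to Theorem~\ref{thm_weakproduct}. The starting point is a natural $2$-to-$1$ correspondence. Writing the two vertex copies as $V_1$ and $V_2$, an edge of $K_n * K_n$ has the form $\{(a,c),(b,d)\}$ with $a\neq b$ in $V_1$ and $c\neq d$ in $V_2$, so it projects to the pair $(\{a,b\},\{c,d\})\in E(K_n)\times E(K_n)$. Conversely, each pair $(\{a,b\},\{c,d\})$ is the image of exactly the two edges $\{(a,c),(b,d)\}$ and $\{(a,d),(b,c)\}$; thus $|E(K_n * K_n)| = 2\binom{n}{2}^2 = 2\,|E(K_n)\times E(K_n)|$.

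The key step I would prove is that the preimage of a single block is the union of two edge-disjoint complete bipartite subgraphs of $K_n * K_n$. Indeed, take a block $B = E(K_{X_1,Y_1})\times E(K_{X_2,Y_2})$, where $X_1,Y_1\subseteq V_1$ and $X_2,Y_2\subseteq V_2$ are the (disjoint) vertex classes. Its elements are the pairs $(\{a,b\},\{c,d\})$ with $a\in X_1$, $b\in Y_1$, $c\in X_2$, $d\in Y_2$, and these lift to the edges $\{(a,c),(b,d)\}$ and $\{(a,d),(b,c)\}$. The first family ranges over all edges between $X_1\times X_2$ and $Y_1\times Y_2$, and the second over all edges between $X_1\times Y_2$ and $Y_1\times X_2$; each is genuinely complete bipartite in $K_n * K_n$ (adjacency holds because $X_1\cap Y_1=\emptyset$ and $X_2\cap Y_2=\emptyset$), and the two are edge-disjoint because the four ``quadrants'' $X_1\times X_2$, $X_1\times Y_2$, $Y_1\times X_2$, $Y_1\times Y_2$ are pairwise disjoint.

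Finally I would assemble these pieces. Given a minimal decomposition of $E(K_n)\times E(K_n)$ into $g(n)$ blocks, the $2$-to-$1$ correspondence shows that taking preimages yields a partition of $E(K_n * K_n)$, and by the previous step this partition uses at most $2g(n)$ complete bipartite graphs. Theorem~\ref{thm_weakproduct} then forces $2g(n)\ge (n-1)^2+1$, and since $g(n)$ is an integer this gives $g(n)\ge \lceil((n-1)^2+1)/2\rceil$, as required. The only real content is the middle step identifying the lift of a block with a pair of complete bipartite graphs; the correspondence and the concluding inequality are just bookkeeping.
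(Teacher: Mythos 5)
Your proposal is correct and is essentially the paper's own argument: both lift each block to the same pair of complete bipartite subgraphs of $K_n * K_n$ (with vertex classes given by the four ``quadrant'' products) and then invoke Theorem~\ref{thm_weakproduct}. You simply spell out the $2$-to-$1$ correspondence and the edge-disjointness in more detail than the paper does.
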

 \begin{proof}
  Suppose we can decompose $E(K_n)\times E(K_n)$ into $q$ blocks. For each of such blocks (say the parts from the left $K_n$ are $X_1,X_2$ and the parts from the right $K_n$ are $Y_1,Y_2$), we construct two complete bipartite graphs $G_1$ and $G_2$ as follows. The vertex classes of $G_1$ are $\{(x,y): x\in X_1, y\in Y_1\}$ and $\{(x,y): x\in X_2, y\in Y_2\}$; while the vertex classes of $G_2$ are $\{(x,y): x\in X_1, y\in Y_2\}$ and $\{(x,y): x\in X_2, y\in Y_1\}$.
  
  Observe that these $2q$ complete bipartite graphs partition the edge set of the weak product $K_n * K_n$. So by Theorem~\ref{thm_weakproduct}, we must have $$q\ge \left\lceil\frac{(n-1)^2+1}{2}\right\rceil.$$
 \end{proof}
 
In general, for any fixed $k$, can we improve the upper bound of $(k-1)(n-1)$ on $g(K_k, K_n)$ in a manner similar to what we have considered for $k=3$ and $k=4$? 
It seems that perhaps there is no $K_k$ having a `better' allowed sets of decompositions than the four allowed decompositions of $K_4$ that we used in Section~\ref{sec4}.
If this is correct, perhaps $\frac{4}{5}$ is the right constant even for $g(n)$.

\begin{question}
 Is it true that $g(n) = \frac{4}{5}(1+o(1))n^2$?
\end{question}

Finally, let us turn our attention to the function $f_r$ for $r>4$.
For fixed $r\ge 1$, let $\alpha_r$ be the smallest $\alpha$ such that $f_r(n) \le \alpha(1+o(1)) \binom{n}{\lfloor r/2\rfloor}$.
Thus the initial construction gives $\alpha_r \le 1$ for all $r$, while
Theorem~\ref{thm_n} says that $\alpha_4\le \frac{14}{15}$.
This implies that $\alpha_{r}\le \frac{14}{15}$ for all even $r$.

\begin{thm}
 For each fixed $k\ge 2$, we have $$f_{2k}(n)\le \frac{14}{15}(1+o(1))\binom{n}{k}.$$
\end{thm}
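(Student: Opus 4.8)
The plan is to reduce the general even case $f_{2k}(n)$ to the base case $f_4(n) = f_{2\cdot 2}(n)$, which Theorem~\ref{thm_n} already controls. The natural mechanism is the same splitting argument used in the proof of Proposition~\ref{prop_fn}: partition the $n$ vertices into two halves $A, B$ with $|A| = |B| = n/2$, and classify each $2k$-edge by its intersection pattern $(|e \cap A|, |e \cap B|) = (i, 2k-i)$ with $0 \le i \le 2k$. The blocks of edges entirely inside $A$ or inside $B$ (i.e.\ $i = 2k$ or $i = 0$) are handled recursively by $f_{2k}(n/2)$, and this recursion is what propagates the constant $\tfrac{14}{15}$ upward. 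The mixed classes need a different treatment, and the crucial observation is that the \emph{dominant} term in $\binom{n}{k}$ comes from the balanced split $i = k$.

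First I would set up the induction on $n$ with hypothesis $f_{2k}(n) \le \tfrac{14}{15}\binom{n}{k} + Cn^{k-1}\log n$ for a suitable constant $C = C(k)$, mirroring \eqref{eqn_fn}. For the edges of type $(i, 2k-i)$, I would use a product decomposition: a complete $i$-partite $i$-graph on $A$ crossed with a complete $(2k-i)$-partite $(2k-i)$-graph on $B$ is a complete $2k$-partite $2k$-graph. This shows that the type-$(i,2k-i)$ class can be decomposed into at most $f_i(n/2)\cdot f_{2k-i}(n/2)$ pieces (for $1 \le i \le 2k-1$), which is of order $n^{\lfloor i/2\rfloor + \lfloor (2k-i)/2\rfloor} \le n^{k}$ with equality only when $i$ is even. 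The key point is that for the balanced class $i = k$, I would instead invoke a generalised product function: just as $g = g(K_n, K_n)$ captured the balanced $(2,2)$-split for $r = 4$, here I need the minimum number of blocks to partition $E(K_{n/2}^{(k)}) \times E(K_{n/2}^{(k)})$, and the analogue of Proposition~\ref{prop_k4k6} / Proposition~\ref{prop_gn} gives a constant strictly below $1$ precisely on this term.

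\textbf{The main obstacle} is arranging that the savings from the balanced term, combined with the recursion, yield exactly $\tfrac{14}{15}$ rather than some weaker constant. The cleanest route, which I would pursue, is to avoid re-deriving a multidimensional product theorem and instead argue directly at the level of $\alpha_{2k}$: since $\binom{n}{k}$ is the relevant normalising quantity and the extremal construction for $f_{2k}$ decomposes along a chosen coordinate into products of the $f_4$-type structure, one shows $\alpha_{2k} \le \alpha_4$ by an explicit construction that applies the $K_4 \times K_6$ gadget of Proposition~\ref{prop_k4k6} on a single pair of coordinates while using the trivial star construction on the remaining $2(k-2)$ coordinates. Concretely, fixing a product structure that treats two of the $k$ ``slots'' via the $g$-improvement and the other $k-2$ slots trivially multiplies the saving factor $\tfrac{14}{15}$ into the leading term $\binom{n}{k}$, since the trivial coordinates contribute only lower-order corrections. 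Verifying that the error terms genuinely remain $o\!\left(\binom{n}{k}\right)$ — that no mixed class of type $(i,2k-i)$ with $i$ even and $i \ne k$ secretly matches the leading order — is the delicate bookkeeping step, but it follows because such classes contribute $\binom{n/2}{i/2}\binom{n/2}{k-i/2}$, which is a constant factor $2^{-k}\binom{k}{i/2}^{-1}$ smaller than $\binom{n}{k}$ for each fixed $i$, and summing the finitely many non-dominant types keeps everything within the claimed bound.
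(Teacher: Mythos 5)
There is a genuine gap, and in fact the central quantitative claim of your first (and main) strand is false. In the halving decomposition $V=A\cup B$, the classes of type $(i,2k-i)$ with $i$ even and $0<i<2k$ are \emph{not} lower-order: type $(2j,2k-2j)$ costs about $\binom{n/2}{j}\binom{n/2}{k-j}$ pieces, and by Vandermonde $\sum_{j=0}^{k}\binom{n/2}{j}\binom{n/2}{k-j}=\binom{n}{k}$, so the mixed even classes together already account for $(1-2^{1-k}+o(1))\binom{n}{k}$ --- almost all of the budget. Consequently, saving a factor $\tfrac{14}{15}$ only on the balanced class $i=k$ (even granting the unproven $k$-graph analogue of Proposition~\ref{prop_k4k6} that this would require) yields a constant of the form $1-\tfrac{1}{15}\binom{k}{k/2}2^{-k}(1-2^{1-k})^{-1}$, which is strictly worse than $\tfrac{14}{15}$ for $k\ge 3$; and when $k$ is odd the balanced class $(k,k)$ is itself of lower order $n^{k-1}$, so it contributes no saving at all. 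To make the halving route work you would have to beat the trivial constant on \emph{every} mixed even class, e.g.\ by applying the inductively known bound $f_{2j}(n/2)\le\frac{14}{15}(1+o(1))\binom{n/2}{j}$ to one factor of each product --- a step you do not take.

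Your second strand (apply the $f_4$ improvement on one pair of ``slots'' and the trivial star construction on the rest) is the right idea and is essentially what the paper does, but you do not execute it: the assertion that the trivial coordinates ``contribute only lower-order corrections'' is again not right (they contribute the factor of order $n^{k-2}$ that builds the leading term; what is true is that their \emph{constant} is $1$), and transferring the constant $\tfrac{14}{15}$ from $\binom{a}{2}$ up to $\binom{n}{k}$ requires an explicit convolution identity such as $\sum_{a}\binom{a-1}{2}\binom{n-a}{k-3}=\binom{n}{k}$, which you neither state nor verify. The paper avoids all of this bookkeeping with a two-line induction on $k$: ordering the vertices and classifying $(2k+2)$-edges by their second vertex $i$ gives $f_{2k+2}(n)\le\sum_{i=2}^{n-2k}f_{2k}(n-i)$, and summing $\frac{14}{15}(1+o(1))\binom{m}{k}$ over $m$ gives $\frac{14}{15}(1+o(1))\binom{n}{k+1}$ by the hockey-stick identity. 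I would recommend abandoning the halving decomposition entirely and writing out this one-coordinate peeling argument in full.
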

\begin{proof}
 We use induction on $k$. By Theorem~\ref{thm_n}, the result is true for the base case $k=2$. For larger $k$, the result is an easy consequence of the following inequality:
 $$f_{2k+2}(n) \le f_{2k}(n-2) + f_{2k}(n-3) + \ldots + f_{2k}(2k).$$
 This inequality is obtained by ordering the $n$ vertices and observing that the set of $(2k+2)$-edges whose second vertex is $i$, for any fixed $i\in \{2,3,\ldots,n-2k\}$, may be  decomposed into $f_{2k}(n-i)$ complete $(2k+2)$-partite $(2k+2)$-graphs.
\end{proof}

We do not see how to obtain a bound below 1 for $\alpha_r$ for $r$ odd.
But actually we would expect the following to be true.

\begin{conjecture}\label{qs_cr}
 We have $\alpha_r\rightarrow 0$ as $r\rightarrow\infty$.
\end{conjecture}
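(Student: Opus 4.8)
The plan is to prove the conjecture by exhibiting, for every $r$, a decomposition of $K_n^{(r)}$ that beats the trivial construction by a factor tending to $0$ as $r\to\infty$; the only inputs are the trivial bound $\alpha_s\le 1$ for all $s$ and the single improvement $\alpha_4\le\frac{14}{15}$ of Theorem~\ref{thm_n}. I first treat even $r=2k$. Split the vertex set of $K_n^{(2k)}$ into $k$ parts $V_1,\dots,V_k$ each of size $m=n/k$, and classify each $2k$-edge $e$ by its \emph{profile} $(a_1,\dots,a_k)$, where $a_i=|e\cap V_i|$ and $\sum_i a_i=2k$. For a fixed profile the edges form the product $\prod_i E\big(K_{V_i}^{(a_i)}\big)$; taking the product of minimal decompositions of the factors partitions this set into $\prod_i f_{a_i}(m)$ complete $2k$-partite $2k$-graphs, since a product of complete multipartite graphs on disjoint vertex sets is itself complete $2k$-partite (its classes are the union of the classes of the factors, and $\sum_i a_i=2k$). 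Summing over profiles gives $f_{2k}(n)\le\sum_{(a_i)}\prod_i f_{a_i}(m)$. Because $\sum_i\lfloor a_i/2\rfloor\le k$ with equality precisely when every $a_i$ is even, only the all-even profiles contribute to the leading power $n^k$; writing $a_i=2b_i$ the leading coefficient is controlled by $\prod_i \alpha_{2b_i}/b_i!$.

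The crucial step is to read the resulting constant probabilistically. With $\phi(x)=\sum_{b\ge 0}\frac{\alpha_{2b}}{b!}x^b$ one obtains
\[
 \alpha_{2k}\ \le\ \frac{k!}{k^k}\,[x^k]\phi(x)^k\ =\ \mathbb{E}\Big[\textstyle\prod_{i=1}^k\alpha_{2B_i}\Big],
\]
where $(B_1,\dots,B_k)$ is the occupancy vector obtained by throwing $k$ balls uniformly into $k$ boxes (the law $\Pr(\vec B=\vec b)\propto\prod_i 1/b_i!$ on $\sum b_i=k$ is exactly this occupancy distribution). A box holding exactly two balls corresponds to a part of size four, on which the improvement $\alpha_4\le\frac{14}{15}$ applies; since $\alpha_{2b}\le 1$ in general, $\prod_i\alpha_{2B_i}\le\big(\frac{14}{15}\big)^{N_2}$, where $N_2=\#\{i:B_i=2\}$. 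Now $\mathbb{E}[N_2]\sim\frac{k}{2e}$ and $N_2$ is concentrated (the indicators $\{B_i=2\}$ are negatively associated), so $\mathbb{E}\big[(\tfrac{14}{15})^{N_2}\big]\le(\tfrac{14}{15})^{k/(4e)}+\Pr\big(N_2<\tfrac{k}{4e}\big)\to 0$. Hence $\alpha_{2k}\to 0$.

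For odd $r=2k+1$ I use the same $k$-way split. The leading profiles now have exactly one odd part (as $\sum a_i$ is odd, $\sum\lfloor a_i/2\rfloor=k$ forces a unique odd $a_j$), and the analogous computation gives
\[
 \alpha_{2k+1}\ \le\ \mathbb{E}\Big[\textstyle\sum_{j=1}^k\alpha_{2B_j+1}\prod_{i\ne j}\alpha_{2B_i}\Big]\ \le\ \tfrac{15k}{14}\,\mathbb{E}\big[(\tfrac{14}{15})^{N_2}\big],
\]
again with $\vec B$ the balls-in-boxes occupancy. The extra linear factor $k$, coming from the choice of which part is odd, is harmless because $\mathbb{E}[(\tfrac{14}{15})^{N_2}]$ decays exponentially in $k$; thus $\alpha_{2k+1}\to 0$ as well. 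The one profile $(2k+1,0,\dots,0)$ reintroduces $\alpha_{2k+1}$ on the right, but with coefficient $O(k^{2-k})$, which is absorbed using $\alpha_{2k+1}\le 1$.

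The main obstacle is bookkeeping rather than a new idea. One must check that for each fixed $k$ the non-leading profiles contribute only $o(n^k)$ as $n\to\infty$; since there are finitely many profiles this follows from the uniform estimate $f_{a_i}(m)\le\alpha_{a_i}(1+o(1))\binom{m}{\lfloor a_i/2\rfloor}$ for each factor. One must also make the concentration of $N_2$ quantitative --- e.g.\ by bounded differences --- to justify $\mathbb{E}[(\tfrac{14}{15})^{N_2}]\to 0$, with special care in the odd case, where the exponential decay must outpace the factor $k$. Finally, I would stress that the argument uses only $\alpha_4<1$: any single strict improvement $\alpha_{2t}<1$ would propagate identically, since a positive proportion of boxes contain exactly $t$ balls, so that a constant-factor saving at one scale compounds into a factor tending to $0$.
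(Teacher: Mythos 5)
The first thing to note is that the paper does \emph{not} prove this statement: it appears as Conjecture~\ref{qs_cr} and is left open. The authors' only remark towards it is that $\alpha_5<1$ would suffice (via the separator construction sketched after the conjecture), and they say explicitly that they cannot get below $1$ for any odd uniformity. So there is no proof in the paper to compare yours against; it has to stand on its own. Having checked it step by step, I cannot find a flaw: your argument appears to be a correct proof of the conjecture, and it uses only the two facts the paper does establish, namely $\alpha_s\le 1$ for all $s$ and $\alpha_4\le\frac{14}{15}$ (Theorem~\ref{thm_n}).

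The load-bearing steps all check out. The profile recursion $f_{2k}(n)\le\sum_{(a_i)}\prod_i f_{a_i}(m)$ is valid because a product of complete $a_i$-partite $a_i$-graphs on disjoint vertex sets is a complete $\big(\sum_i a_i\big)$-partite graph, and for fixed $k$ there are finitely many profiles, so the non-extremal ones contribute $O_k(n^{k-1})$. The identity $\frac{k!}{k^k}[x^k]\phi(x)^k=\mathbb{E}\prod_i\alpha_{2B_i}$ is exactly the multinomial law. Bounding $\prod_i\alpha_{2B_i}\le(\frac{14}{15})^{N_2}$ uses only $\alpha_{2b}\le 1$ and $\alpha_4\le\frac{14}{15}$, and this also removes any circularity worry: the occupancy vector $(k,0,\dots,0)$ does put $\alpha_{2k}$ (resp.\ $\alpha_{2k+1}$) on the right-hand side, but it is simply bounded by $1$ there, so the final inequality involves only known constants. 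Finally, $\mathbb{E}N_2\sim k/(2e)$, and bounded differences (moving one ball changes $N_2$ by at most $2$) gives $\Pr\big(N_2\le k/(4e)\big)\le e^{-\Omega(k)}$, so $\mathbb{E}(\frac{14}{15})^{N_2}\le e^{-ck}$; this exponential decay is what pays for the factor $\frac{15k}{14}$ in the odd case. That last point is precisely what the paper's own suggested route lacks: the paper needs the still-unknown improvement $\alpha_5<1$, whereas you show the even saving alone compounds fast enough to absorb the linear-in-$k$ loss that odd uniformities force (the choice of which part is odd).

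Two smaller comments. First, the parenthetical claim that the indicators $\mathbf{1}[B_i=2]$ are negatively associated is not justified: negative association of the occupancy counts is preserved under coordinatewise \emph{monotone} maps, and $\mathbf{1}[B_i=2]$ is not monotone in $B_i$. Drop it; your bounded-differences argument is the one to keep, and, as you correctly flag, it must give an exponential (not Chebyshev) tail for the odd case. Second, the concentration step can be avoided entirely. Since $\phi$ has nonnegative coefficients, its $x^2$-coefficient is $\alpha_4/2\le\frac12-\frac1{30}$, and hence $\phi(1)\le e-\frac1{30}$, one has $[x^k]\phi(x)^k\le\phi(1)^k$ and $[x^k]\big(e^x\phi(x)^{k-1}\big)\le e\,\phi(1)^{k-1}$, whence by Stirling
\[
\alpha_{2k}\ \le\ \frac{k!}{k^k}\Big(e-\tfrac1{30}\Big)^{k}\ \le\ e\sqrt{k}\,\Big(1-\tfrac1{30e}\Big)^{k},
\qquad
\alpha_{2k+1}\ \le\ \frac{k!}{k^k}\,k\,e\Big(e-\tfrac1{30}\Big)^{k-1}\ \le\ e\,k^{3/2}\Big(1-\tfrac1{30e}\Big)^{k-1},
\]
both of which tend to $0$ exponentially. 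This one-line coefficient bound replaces McDiarmid and makes the whole proof elementary; given that the statement is an open conjecture of the paper, the argument deserves to be written up with this level of care.
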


To prove this, it would be sufficient to show that $\alpha_5<1$.
Indeed, suppose $f_5(n) \le (\alpha+o(1))\binom{n}{2}$ for some $\alpha<1$.
Let $r = 6k-1$ and order the $n$ vertices.
We can decompose the complete $r$-graph on $n$ vertices by considering the set of $r$-edges whose $6$th, $12$th, $\ldots, 6(k-1)$th are $i_1,i_2,\ldots,i_{k-1}$ respectively, where $i_1\ge 6$ and $i_{k-1}\le n-5$ and $i_j-i_{j-1}\ge 6$ for $2\le j\le k-1$. 
For each such fixed $i_1,i_2,\ldots,i_{k-1}$, these $r$-edges can be decomposed into $f_5(i_1-1)f_5(i_2-i_1-1)\ldots f_5(i_{k-1}-i_{k-2}-1)f_5(n-i_{k-1})$
complete $r$-partite $r$-graphs.
Summing over all possible choices of $i_1,i_2,\ldots,i_{k-1}$, we deduce that $f_{6k-1}(n)\le (\alpha^k + o(1))\binom{n}{3k-1}$.

Annoyingly, we do not see how to use any of our arguments about $f_4$ for $f_5$. 

\begin{question}
 Is $\alpha_5<1$? In other words, do we have $f_5(n) \le (\alpha+o(1))\binom{n}{2}$ for some $\alpha<1$?
\end{question}

\end{document}